\documentclass{article}
\usepackage{amsmath,amssymb,amsthm,graphicx,comment}
\usepackage[T2A]{fontenc}
\usepackage[english]{babel}
\usepackage[left=2.5cm,right=2.2cm,top=2.5cm,bottom=3.5cm]{geometry}
\usepackage{forest}
\usetikzlibrary{arrows.meta}
\usepackage[upright]{fourier}
\usepackage{tkz-graph}
\usetikzlibrary{shapes.geometric, arrows}
\usetikzlibrary{automata,positioning}

\let\le\leqslant
\let\ge\geqslant
\let\leq\leqslant
\let\geq\geqslant

\newtheorem{theorem}{Theorem}

\newtheorem{lemma}{Lemma}
\newtheorem*{remark*}{Remark}
\newtheorem{statement}{Statement}

\theoremstyle{definition}
\newtheorem{definition}{Definition}
\newtheorem*{hypothesis*}{Hypothesis}
\title{Maximum spread of vertex degrees in a simple graph}
\author{Sergey Onishchenko\footnote{Saint Petersburg University, Faculty of Mathematics and Computer Science, 7/9 Universitetskaya nab., St. Petersburg, 199034 Russia. E-mail: dimer03@mail.ru}}
\date{April 9, 2026}

\begin{document}

\maketitle

\begin{abstract}
We consider the following problem: let $n>k$ --- positive integers, $G$ ---
a graph on $n$ vertices (undirected, without loops or multiple edges). 
Let $h_k(G)$ denote the number of unordered pairs of vertices of the graph $G$ whose degrees differ by less than $k$. We seek to determine the smallest
possible value $f(n,k)$ of $h_k(G)$. 
The interest in this question is motivated by the fact that the bipartite analogue of the problem allowed S. Cichomski and F. Petrov \cite{CP} to prove the Burdzy--Pitman conjecture on the spread of independent identically distributed random variables. 
\end{abstract}

Keywords: vertex degree, simple graphs

\section{Introduction}
We consider the following problem: let $n>k$ --- positive integers, $G$ ---
a graph on $n$ vertices (undirected, without loops or multiple edges). 
Let $h_k(G)$ denote the number of unordered pairs of vertices of the graph $G$ whose degrees differ by less than $k$. We seek to determine the smallest
possible value $f(n,k)$ of $h_k(G)$. 
The interest in this question is motivated by the fact that the bipartite analogue of the problem allowed S. Cichomski and F. Petrov \cite{CP} to prove the Burdzy--Pitman conjecture on the spread of independent identically distributed random variables. 

The problem was first posed in our paper \cite{CP}. There, a conjectured answer to the problem was given, an upper bound was proved, and a lower bound was proved under several restrictions on $n$ and $k$. In this paper, a complete proof of the conjecture is presented.

\newpage


\section{\Large Statement of the main result}

The following theorem gives an upper bound for $f(n,k)$. 
\begin{theorem}\label{t1}
There exists a graph 
$G_{n,k}$ on $n$ vertices such that $h_k(G_{n,k})=f_0(n,k)$, where
$$
f_0(n,k):= 
\left(\left\lceil \frac{n}{k} \right\rceil - 2\right){k\choose 2} + {k+1\choose 2} + 
{n-k \left( \left\lceil\frac{n}{k} \right\rceil - 1 \right)-1
\choose 2}
$$
Consequently, $f(n,k)\leqslant f_0(n,k)$.
\end{theorem}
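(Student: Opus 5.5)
The plan is to build $G_{n,k}$ explicitly as a "threshold" (nested) graph on a partition of the vertex set into blocks. Inside each block all degrees coincide, and degrees of different blocks differ by at least $k$. Then $h_k(G_{n,k})$ is exactly $\sum_i \binom{|B_i|}{2}$.

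Write $m=\lceil n/k\rceil\ge 2$ (since $n>k$) and $n=k(m-1)+r$ with $1\le r\le k$. The third binomial in $f_0(n,k)$ is then $\binom{r-1}{2}$. So $f_0$ is the value of $\sum\binom{|B_i|}{2}$ for the following block sizes:
\begin{itemize}
\item $m-2$ blocks of size $k$;
\item one block of size $k+1$;
\item one block of size $r-1$, possibly empty.
\end{itemize}
These sizes sum to $k(m-2)+(k+1)+(r-1)=n$. The remaining task is to arrange these $t=m$ blocks so that the degree gaps between blocks are all at least $k$.

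\textbf{Construction.} Label the blocks $B_1,\dots,B_t$. Join every vertex of $B_i$ to every vertex of $B_j$ ($i\ne j$) iff $i+j\ge t+2$. Make $B_i$ a clique iff $2i\ge t+2$, and an independent set otherwise. A vertex of $B_i$ then has degree
$$d_i=\sum_{j\ge t+2-i}|B_j| \;-\; [\,2i\ge t+2\,].$$
This depends only on $i$, so all pairs inside a block count toward $h_k$.

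\textbf{Degree gaps.} Passing from $i$ to $i+1$ gives
$$d_{i+1}-d_i=|B_{t+1-i}|-\varepsilon_i,$$
where $\varepsilon_i=1$ exactly at the single index where the indicator switches on, and $\varepsilon_i=0$ otherwise. Two consequences:
\begin{itemize}
\item As $i$ runs over $1,\dots,t-1$, the blocks appearing in the gaps are $B_2,\dots,B_t$. Hence $|B_1|$ never appears, and we put the small block of size $r-1$ there.
\item The correction $-1$ occurs at one specific block: index $t/2+1$ if $t$ is even, index $(t+1)/2$ if $t$ is odd. We place the block of size $k+1$ at that index, and blocks of size $k$ everywhere else.
\end{itemize}
Then every gap $d_{i+1}-d_i$ equals $k$ or $(k+1)-1=k$. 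So $d_1<d_2<\dots<d_t$ with consecutive differences exactly $k$. Any two vertices in different blocks therefore have degrees differing by at least $k$, and $h_k(G_{n,k})=f_0(n,k)$. The bound $f(n,k)\le f_0(n,k)$ follows immediately.

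\textbf{Main obstacle.} The delicate point is the bookkeeping of the $-1$ self-correction. I need to check that the special block index lies in $\{2,\dots,t\}$ and does not coincide with $B_1$. I also need to check the degenerate cases: $m=2$ (where $t=2$, the special index is $2$, and $B_1$ has size $r-1$), and $r=1$, where $B_1$ is empty and contributes nothing. I would verify these directly.
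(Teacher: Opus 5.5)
Your proposal is correct and is the paper's construction with indices shifted by one. Your blocks $B_1,\dots,B_t$, with adjacency when $i+j\ge t+2$ and the $(k+1)$-block at index $\lfloor t/2\rfloor+1$, are exactly the paper's groups $V_0,\dots,V_i$, joined when the index sum exceeds $i$, with the $(k+1)$-group $V_{\lfloor (i+1)/2\rfloor}$; your telescoping check of the gaps $d_{i+1}-d_i$ is equivalent to the paper's observation that the missing self-edge and the extra vertex always occur together, so group $j$ has degree $kj$, and the degenerate cases you flag are already covered by your general argument because $t\ge 2$ puts the special index in $\{2,\dots,t\}$.
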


The following theorem gives a lower bound for $f(n,k)$. 
\begin{theorem}\label{t2}
$h_k(G)\geqslant f_0(n,k)$ for every graph
$G$ on $n$ vertices. 
\end{theorem}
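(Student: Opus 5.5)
We rewrite $f_0$ first. Put $m=\lceil n/k\rceil$ and $n=(m-1)k+r$ with $1\le r\le k$. Then
$$f_0(n,k)=(m-2)\binom k2+\binom{k+1}2+\binom{r-1}2 .$$
This is the number of "close" pairs in an ideal configuration: $m-2$ clusters of $k$ equal-ish degrees, one cluster of $k+1$, and one cluster of $r-1$, with distinct clusters far apart. It equals $\sum\binom{s_i}2$ for a partition of $n$ into parts $k,\dots,k,k+1,r-1$.

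The lower bound combines two ingredients. The first is purely arithmetic: a bound on the number of pairs with difference $<k$ in a multiset of $n$ integers confined to an interval of given length. The second is the graph-theoretic constraint that degrees lie in $\{0,\dots,n-1\}$ and that $0$ and $n-1$ cannot both occur. So the degrees actually lie in an interval containing only $n-1$ integer values. It is this "one value too few" that forces the extra cluster of size $k+1$ instead of $k$.

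We proceed by induction on $n$ in steps of $k$, using $f_0(n,k)-f_0(n-k,k)=\binom k2+(\text{cross term})$ once $n-k>k$. For the base range $k<n\le 2k$ we have $m=2$, so $f_0=\binom{k+1}2+\binom{r-1}2$. Here we argue directly. Sort the degrees $d_1\le\dots\le d_n$. The values lie in a window of at most $n-1$ consecutive integers, which we split into $[a,a+k-1]$ and the remaining $r-1$ values. A convexity (majorization) argument on the block occupancies, together with counting cross-block pairs that are still within distance $<k$, gives at least $\binom{k+1}2+\binom{r-1}2$.

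In the inductive step we use the symmetry $h_k(G)=h_k(\overline G)$, since complementation maps degrees $d\mapsto n-1-d$. This lets us assume the extreme value is at the top, i.e. no vertex has degree $n-1$, or an analogous normalization. We then delete the set $S$ of the $k$ vertices of largest degree. Every pair inside $S$ is counted unless the degrees in $S$ spread over at least $k$ values. In that case a "gap" argument shows that many pairs between $S$ and $V\setminus S$ are close, so that in all cases we lose at least $\binom k2$ plus the cross contribution that $f_0$ predicts.

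The main obstacle is that deleting $S$ changes the degrees of the remaining vertices by up to $k$, non-uniformly. So $h_k(G-S)$ is not simply comparable with the close pairs among $V\setminus S$ in $G$. To handle this, we will not induct on the graph itself. Instead, we first prove an abstract lemma. Let $d_1\le\dots\le d_n$ be integers whose set of values lies in an interval containing at most $n-1$ integers, and assume additionally a local constraint derived from Erdős--Gallai near the extremes. Such a sequence has at least $f_0(n,k)$ pairs with $d_j-d_i<k$. Both the induction and the deletion are then carried out on sequences, where removing the top $k$ terms leaves the others unchanged.

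The hard part will be identifying exactly which graphic constraint, beyond "not both $0$ and $n-1$", is needed to make the sequence lemma true in the boundary cases $r=1$ and $r=k$. We would first test small cases against the conjectured extremal graphs $G_{n,k}$ from Theorem~\ref{t1}.
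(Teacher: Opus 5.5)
\textbf{Genuine gap.} The graph-theoretic input that actually drives the bound is never supplied; you defer it as ``the hard part''. The one constraint you do use is too weak whenever $r\ge 2$: that the degrees lie in a window of $n-1$ consecutive integers.

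For $r\ge2$ the $m$ points $0,k,2k,\dots,(m-1)k$ all lie in $\{0,\dots,n-2\}$, and integers placed at different points are never close. So $n$ integers spread as evenly as possible over these points have fewer than $f_0(n,k)$ close pairs, because the parts $k,\dots,k,k+1,r-1$ are not balanced. Concretely, for $n=5$, $k=3$ the multiset $\{0,0,0,3,3\}$ lies in $\{0,1,2,3\}$ and has $4<6=f_0(5,3)$ close pairs.

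Consequently:
\begin{itemize}
\item Your base-case argument (split the window, apply convexity, count cross-block pairs) cannot produce $\binom{k+1}2+\binom{r-1}2$.
\item Your abstract sequence lemma needs an extra hypothesis for every $r\ge2$, not only in the boundary cases. The window constraint alone settles precisely $r=1$, and that is how the paper handles that case (Lemma~\ref{t0}: pass to the complement to exclude degree $n-1$, then apply convexity to $m-1$ blocks of $k$ values).
\end{itemize}

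\textbf{The missing idea.} The core of the paper's argument is a quantitative coupling between the two extreme degree blocks. Let $L$ and $H$ be the sets of vertices with degrees in $[0,r-1]$ and in $[n-r,n-1]$.
\begin{itemize}
\item Each of these intervals is a block of some partition of $\{0,\dots,n-1\}$ into $m$ intervals of length at most $k$. By convexity, a counterexample therefore has $|L|,|H|\ge r$ (Lemma~\ref{l2}).
\item A vertex of $L$ has at most $r-1$ neighbours and a vertex of $H$ at most $r-1$ non-neighbours. Double counting the pairs in $L\times H$ gives $|L||H|\le x|L|+z|H|$, where $x$ is the average degree on $L$ and $z$ the average non-degree on $H$.
\item Hence $x+z\ge\min(|L|,|H|)\ge r$; this is sharpened in Lemmas~\ref{l3'} and~\ref{l3}.
\end{itemize}
So $L$ cannot sit near degree $0$ while $H$ sits near $n-1$. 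A definite fraction of $L$ lies above a threshold, and a definite fraction of $H$ below one. These vertices form close pairs with the neighbouring blocks (Statements~\ref{l5} and~\ref{l5!}), which compensate for the convexity deficit in Lemmas~\ref{l336} and~\ref{l36}. The paper does all of this for one fixed $G$, with no induction.

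\textbf{The induction does not close as sketched.}
\begin{itemize}
\item $f_0(n,k)-f_0(n-k,k)=\binom k2$ exactly once $m\ge3$, so there is no cross term to recover.
\item After removing the top $k$ terms, the remaining degrees need not lie in a window of $n-k-1$ values. For $K_{n-1}\cup K_1$ they are $0$ and $n-2$.
\item The shortened sequence therefore fails the window condition, let alone any coupling condition of the above type, so the inductive hypothesis cannot be invoked.
\end{itemize}
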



\section{\Large Upper bound}

\begin{definition}\label{main} Let $G=(V,E)$ be a graph with $n$ vertices, $k<n$.
Define the numbers $i,t$ as follows: 
$n=ki+t$, $1 \leq t \leq k$, i.e. $i=\lceil n/k\rceil-1$, $t=n-k(\lceil n/k\rceil-1)$.
\end{definition}

\begin{definition}\label{main}
We call the unordered pair of vertices $(x,y)$ of $G$ close if $|\deg(x)-\deg(y)| < k$.
Let $A(G)$ denote the set of close pairs:
$A(G)=\{(x,y)|x,y \in V(G), |\deg(x)-\deg(y)| < k\}$.
\end{definition}

\begin{proof}[Proof of Theorem \ref{t1}]
Construct such a graph. Partition the vertices into $i+1$ groups 
$V_0,\ldots,V_i$ numbered 
$0, \dots, i$. Group $V_0$ contains $t-1=n-ki-1$ vertices, and group $V_{\lfloor (i+1)/2\rfloor}$ contains $k+1$
vertices, while the others contain $k$ vertices each (the total number of vertices is
$n-ki-1+ki+1=n$, as required).)

Edges between two vertices (which may lie in 
the same group or in different groups) are added
if and only if the sum of their group indices is strictly greater than $i$. In particular,
vertices in group $V_0$ have degree 0.
Let us verify that this graph works. Take a vertex $v$ from group $V_j$,
$j\in \{1,\ldots,i\}$, and compute its
degree. Vertex $v$ is adjacent to all vertices in the $j$ groups 
$V_{i-j+1},\ldots,V_i$ and
and to no others. Then into all these groups, except possibly the groups with indices $j$ and
$\lfloor (i+1)/2\rfloor$
there are exactly $k$ edges (there is one fewer edge to group $V_j$ because there is no edge
to vertex $v$ itself; there is one extra edge to group $V_{\lfloor (i+1)/2\rfloor}$
because it contains one extra vertex; if they are the same group,
these effects cancel and the number of edges to group $V_j$ is $k$)
Notice that among the groups $V_{i-j+1},\ldots,V_i$ the group
$V_j$ if and only if 
$j\geqslant i-j+1 \Leftrightarrow j\geqslant \lceil (i+1)/2\rceil$,
and the group $V_{\lfloor (i+1)/2\rfloor}$ --- if and only if
$i-j+1\leqslant \lfloor (i+1)/2\rfloor \Leftrightarrow j\geqslant (i+1)-
\lfloor (i+1)/2\rfloor=\lceil (i+1)/2\rceil$ --- so either both of these special groups are present or neither is. Thus, 
in total, from our vertex
there are always exactly $kj$ edges to the groups $V_{i-j+1},\ldots,V_i$. Therefore, the pairs of vertices of $G$
whose degrees differ by less than $k$ are exactly the pairs
of vertices from the same group. Obviously, there are exactly $f_0(n,k)$ such pairs.
\end{proof}

\newpage


\section{\Large Lower bound - proof of Theorem \ref{t2}}

\subsection{Inequalities used below}

\begin{statement}\label{l5}
    Let $i\geqslant 2$ be a positive integer,
    $a_1,\ldots,a_i$ and 
    $b_0,\ldots,b_{i-1}$ be nonnegative numbers. 
    Also set 
    $a_0=b_i:=0$.
    Then $$\sum_{j=0}^{i-1} b_ja_{j+1}\geqslant \min\{(b_j+a_j)
    (b_{j+1}+a_{j+1}),
    0\le j\le i-1\}.$$
\end{statement}

\begin{proof}
    Induction on $i$. 
    
    Base case $i=2$: $b_0a_1+b_1a_2\geqslant \min(b_0,a_2)(a_1+b_1) = \min\{b_0(a_1+b_1), (b_1+a_1)a_2\}$.

    Inductive step from $i-1$ to $i\ge3$. Using the induction hypothesis,
    it suffices to verify the inequality 
    \begin{align*}
        &\min\{b_0(b_1+a_1),\ldots,
    (b_{i-2}+a_{i-2})a_{i-1}\}+b_{i-1}a_i \ge \\
    &\min\{b_0(b_1+a_1),\ldots,
    (b_{i-1}+a_{i-1})a_{i}\}
    \end{align*}
    \\
    Either the minimum on the left-hand side is no smaller than that on the right-hand side (and then the inequality holds), or the left-hand side equals  
    \begin{align*}
        (b_{i-2}+a_{i-2})\cdot a_{i-1} + b_{i-1}\cdot a_i\geqslant (b_{i-1}+a_{i-1})\min\{a_i,b_{i-2}+a_{i-2}\}\\
        = \min\{(b_{i-1}+a_{i-1})a_i, (b_{i-1}+a_{i-1})(b_{i-2}+a_{i-2})\} \geq \min\{b_0(a_1+b_1), \dots ,(b_{i-1}+a_{i-1})a_i\}
    \end{align*}
\end{proof}

\begin{statement}\label{l5!}
    Let $i\geqslant 2$ be a positive integer,
    $a_0,\ldots,a_i$, $b_0,\ldots,b_i$, and 
    $c_0,\ldots,c_{i-1}$ be nonnegative numbers.
    Also set 
    $c_i:=0$.
    Then either $$\sum_{j=0}^{i-1} c_ja_{j+1}+c_jb_{j+1}+b_ja_{j+1}\geqslant \min\{(a_j+b_j+c_j)
    (a_{j+1}+b_{j+1}+c_{j+1}),
    1\le j\le i-1\},$$ or $$\sum_{j=0}^{i-1} c_ja_{j+1}+c_jb_{j+1}+b_ja_{j+1} \geqslant c_0(a_1+b_1+c_1) +$$ $$+ \min\{(a_1+b_1+c_1)(a_2+b_2+c_2), \dots , (a_{i-2}+b_{i-2}+c_{i-2})(a_{i-1}+b_{i-1}+c_{i-1}) , (a_{i-1}+b_{i-1}+c_{i-1})a_i\}$$ or $$\sum_{j=0}^{i-1} c_ja_{j+1}+c_jb_{j+1}+b_ja_{j+1} \geqslant (b_0+c_0)(a_1+b_1+c_1)$$
\end{statement}
\begin{proof} 1) Suppose $b_0+c_0 \leq c_0+a_2, a_2+b_2$, then $(b_0+c_0)(a_1+b_1+c_1) = b_0a_1+c_0(a_1+b_1)+(b_0+c_0)c_1+b_0b_1 \leq b_0a_1+c_0(a_1+b_1)+(a_2+b_2)c_1+a_2b_1$. Therefore \\\
    $\sum_{j=0}^{i-1} c_ja_{j+1}+c_jb_{j+1}+b_ja_{j+1} \geqslant \sum_{j=0}^{i-1} (c_ja_{j+1}+c_jb_{j+1}+b_ja_{j+1}) + (b_0+c_0)(a_1+b_1+c_1) - b_1a_2 - c_1a_2 - c_1b_2 - b_0a_1 - c_0(a_1+b_1) \geqslant (b_0+c_0)(a_1+b_1+c_1)$

2) Suppose $c_0+a_2 \leq b_0+c_0, a_2+b_2$, then $(c_0+a_2)(a_1+b_1+c_1) = (c_1+b_1)a_2 + c_0(a_1+b_1) + a_2a_1 + c_1c_0 \leq (c_1+b_1)a_2 + c_0(a_1+b_1) + b_0a_1 + c_1b_2$. Therefore \\\
    $\sum_{j=0}^{i-1} c_ja_{j+1}+c_jb_{j+1}+b_ja_{j+1} \geqslant \sum_{j=0}^{i-1} (c_ja_{j+1}+c_jb_{j+1}+b_ja_{j+1}) + (c_0+a_2)(a_1+b_1+c_1) - (c_1+b_1)a_2 - c_0(a_1+b_1) - b_0a_1 - c_1b_2 \geqslant c_0(a_1+b_1+c_1) + (a_1+b_1+c_1)a_2 + \sum_{j=2}^{i-1} (c_j+b_j)a_{j+1} \geqslant c_0(a_1+b_1+c_1) + \min\{(a_1+b_1+c_1)(a_2+b_2+c_2), \dots , (a_{i-2}+b_{i-2}+c_{i-2})(a_{i-1}+b_{i-1}+c_{i-1}) , (a_{i-1}+b_{i-1}+c_{i-1})a_i\}$ (the last inequality follows from Statement \ref{l5}, applied to $a_j$ and $b_j+c_j$)
    
3) Suppose $a_2+b_2 \leq c_0+a_2, b_0+c_0$, then $(b_2+a_2)(a_1+b_1+c_1) = (a_2+b_2)c_1 + (a_2+b_2)a_1 +(a_2+b_2)b_1 \leq (a_2+b_2)c_1 + (b_0+c_0)a_1 +(c_0+a_2)b_1$. Therefore\\
    $\sum_{j=0}^{i-1} c_ja_{j+1}+c_jb_{j+1}+b_ja_{j+1} \geqslant \sum_{j=0}^{i-1} (c_ja_{j+1}+c_jb_{j+1}+b_ja_{j+1}) + (b_2+a_2)(a_1+b_1+c_1) - (a_2+b_2)c_1 - (b_0+c_0)a_1 - (c_0+a_2)b_1 \geqslant (b_2+a_2)(a_1+b_1)\sum_{j=1}^{i-1}c_j(a_{j+1}+b_{j+1}) \geqslant \min\{(a_j+b_j+c_j)
    (a_{j+1}+b_{j+1}+c_{j+1}),
    1\le j\le i-1\}$ (the last inequality follows from Statement \ref{l5}, applied to $a_j+b_j$ and $c_j$)
\end{proof}

\begin{statement}\label{l35}
Let $m$ be a positive integer,
$x_1,\ldots,x_m$ be integers,
$\theta(x)$ be a function of an integer argument,
defined on the interval $[\min(x_1,\ldots,x_m),\max(x_1,\ldots,x_m)]$ and convex on it, that is,
the difference
$\theta(x)-\theta(x-1)$ is increasing.
Suppose that the integers $A,B,k$
satisfy $A+B=m$, $Ak+B(k+1)=\sum x_i$. Then $\sum \theta(x_i)\ge A\cdot \theta(k)+B\cdot \theta(k+1)$.
\end{statement}

\begin{proof}   Consider two cases.

1) Both numbers $A,B$ are nonnegative. 
Then, by shifting the integers $x_1,\ldots,x_m$ while preserving
    the sum, we can make any two of them differ by at most 1
    (if some two differ by more than 1, we move them closer while preserving the sum. The process terminates because the sum of the numbers remains unchanged while the sum of their squares strictly decreases).
    The left-hand side of our inequality does not increase in the process.
    Eventually any two numbers differ by at most 1, and then $A$ of them are equal to $k$,
    while the remaining $B$ are equal to $k+1$ --- hence the left-hand side becomes equal to the right-hand side. Since the left-hand side did not increase during the process, it was initially no smaller than the right-hand side.

    2) One of the numbers $A,B$ is negative (for example, $B$; the other case is analogous). Then rewrite the inequality as
    $\sum \theta(x_i)+(-B)\theta(k+1)\geqslant (m-B)\theta(k)$
    and apply the result proved in part 1) to the numbers
    $x_1,\ldots,x_m$ together with $(-B)$ copies of $k+1$, of which there are $m-B$ in total, whose sum is $(m-B)k$.
\end{proof}

\subsection{Beginning of the proof}
First consider the case when $t$ is small.

\begin{lemma}\label{t0}
The theorem holds when 
$t=1$.
\end{lemma}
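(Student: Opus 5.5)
The plan is to use the fact that a graph cannot contain both an isolated vertex and a vertex adjacent to all others, and then apply the convexity inequality of Statement~\ref{l35}.

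When $t=1$ we have $n=ki+1$, and the last binomial in $f_0(n,k)$ is ${0\choose 2}=0$. Hence
$$f_0(n,k)=(i-1){k\choose 2}+{k+1\choose 2}=i{k\choose 2}+k,$$
and we must show $h_k(G)\ge i{k\choose 2}+k$ for every graph $G$ on $ki+1$ vertices.

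All degrees lie in $[0,n-1]=[0,ki]$, which is a set of $ki+1$ integers. The key observation is that degrees $0$ and $n-1=ki$ cannot both occur in $G$: a vertex of degree $n-1$ is adjacent to every other vertex, so no vertex is isolated. Hence at least one of the endpoints $0$ or $ki$ is missing from the degree set, and all degrees lie in a set of $ki$ consecutive integers. This is either $[0,ki-1]$ or $[1,ki]$.

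In the first case, split $[0,ki-1]$ into the $i$ blocks $[jk,(j+1)k-1]$, $j=0,\ldots,i-1$. In the second case, use the blocks $[jk+1,(j+1)k]$. Each block consists of $k$ consecutive integers, so any two vertices whose degrees fall into the same block are a close pair. Let $x_j$ be the number of vertices whose degree lies in the $j$-th block. Then $\sum_{j} x_j=ki+1$ and
$$h_k(G)\ge \sum_{j=0}^{i-1}{x_j\choose 2}.$$

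Now apply Statement~\ref{l35} with $\theta(x)={x\choose 2}$, which is convex on the integers since $\theta(x)-\theta(x-1)=x-1$ is increasing. Take $m=i$, $A=i-1$ and $B=1$. Then $A+B=i$ and $Ak+B(k+1)=ik+1=\sum x_j$, so
$$\sum_j {x_j\choose 2}\ge (i-1){k\choose 2}+{k+1\choose 2}=f_0(n,k).$$

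There is no real obstacle here. The only non-routine step is noticing that $0$ and $n-1$ cannot both be degrees, which is what shortens the degree range to exactly $i$ blocks of length $k$. The edge case $i=1$ (so $n=k+1$) is covered by the same argument: there is a single block containing all $k+1$ vertices, and the bound is ${k+1\choose 2}$, which agrees with $f_0$.
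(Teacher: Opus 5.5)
Your proof is correct and is essentially the paper's argument. The paper assumes no vertex has degree $n-1$ by passing to the complement, which works precisely because degrees $0$ and $n-1$ cannot coexist. It then splits $[0,ki-1]$ into $i$ blocks of length $k$ and applies Statement~\ref{l35} to ${x\choose 2}$ exactly as you do, so treating the ranges $[0,ki-1]$ and $[1,ki]$ directly rather than via complementation is only a cosmetic difference.
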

\begin{proof}
Suppose the theorem is false, with graph $G$ as a counterexample.
Without loss of generality, $G$ has no vertices of degree $n-1$ (otherwise pass to the complement graph).

Partition the numbers from 0 to $n-1$ into $i+1$ intervals
as follows: the first $i$ intervals contain $k$ numbers each,
the last one contains $1$ number. Accordingly,
the vertex set of $G$ is partitioned into $i+1$ groups. One of them is empty. Hence, the number of close pairs of vertices in $G$ is at least $\sum_{j=0}^{i-1} {x_j\choose 2}, \sum x_j=n=ki+1$.

$\sum_{j=0}^{i} {x_j\choose 2} \geq {k+1\choose 2}+(i-1){k\choose 2} = f_0(n,k)$ - by Statement \ref{l35}, since ${x\choose 2}$ is a convex function.

Contradiction.
\end{proof}
From now on assume $t\geq2$

We now prove the lemma:

\begin{lemma}\label{l2}
Let graph $G$ be a counterexample to the theorem for some $n>k$,
$n=ki+t$, $1\leqslant t\leqslant k$.
Partition the set of numbers $\{0, 1, \dots, n-1\}$ into $i+1$ intervals (numbered from 0 to $i$) 
such that each interval has size at most $k$. Then for each interval
the graph $G$ contains at least $t$ vertices whose degrees lie in this interval.
\end{lemma}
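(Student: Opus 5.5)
The idea is to compare $h_k(G)$ with a sum of binomial coefficients over the intervals and use convexity, as in the proof of Lemma \ref{t0}. Let the intervals be $I_0,\ldots,I_i$ and let $x_j$ be the number of vertices of $G$ whose degree lies in $I_j$, so $\sum_{j=0}^i x_j=n=ki+t$. Every interval has at most $k$ consecutive integers, so any two vertices with degrees in the same interval form a close pair. Hence
\[
h_k(G)\ \ge\ \sum_{j=0}^{i}\binom{x_j}{2}.
\]
Suppose, for contradiction, that some interval, say $I_m$, satisfies $x_m=x\le t-1$. I will show that the right-hand side is then at least $f_0(n,k)$, which contradicts the assumption that $G$ is a counterexample.

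The remaining $i$ intervals contain $S=n-x=ki+(t-x)\ge ki+1$ vertices in total. For an integer $S$, let $g(S)$ be the minimum of $\sum\binom{y_j}{2}$ over $i$ nonnegative integers $y_j$ with sum $S$. By Statement \ref{l35}, applied to the convex function $\binom{y}{2}$, this minimum is attained at the balanced distribution: $S=qi+r$ with $0\le r<i$, and $r$ parts equal $q+1$ while the rest equal $q$. So
\[
\sum_j\binom{x_j}{2}\ \ge\ F(x):=\binom{x}{2}+g(n-x).
\]

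Next I compute the increments of $g$. Passing from $S-1$ to $S$ in the balanced distribution raises one smallest part from $\lfloor (S-1)/i\rfloor$ by one. Therefore
\[
g(S)-g(S-1)=\left\lfloor \frac{S-1}{i}\right\rfloor,
\]
and this is at least $k$ whenever $S-1\ge ki$. For $0\le x\le t-2$ we have $n-x-1\ge ki$, so
\[
F(x)-F(x+1)=\bigl(g(n-x)-g(n-x-1)\bigr)-x\ \ge\ k-x\ >\ 0,
\]
because $x<t\le k$. Thus $F$ is decreasing on $\{0,\ldots,t-1\}$, and for every $x\le t-1$ we get $F(x)\ge F(t-1)$.

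At $x=t-1$ the other intervals hold $ki+1$ vertices. Statement \ref{l35} with $A=i-1$ and $B=1$ gives $g(ki+1)=(i-1)\binom{k}{2}+\binom{k+1}{2}$. Hence
\[
F(t-1)=\binom{t-1}{2}+(i-1)\binom{k}{2}+\binom{k+1}{2}=f_0(n,k),
\]
since $t-1=n-k(\lceil n/k\rceil-1)-1$ and $i-1=\lceil n/k\rceil-2$. This contradicts $h_k(G)<f_0(n,k)$, so every interval contains at least $t$ vertices.

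The only delicate step is the monotonicity of $F$, that is, the increment bound for $g$. I expect to handle it as above through the explicit balanced minimizer from Statement \ref{l35}. Alternatively, a direct exchange argument works: move one vertex from the largest other class, which has size at least $k+1$ because $S>ki$, into $I_m$. This changes $\sum\binom{x_j}{2}$ by $x-(\text{size}-1)\le x-k<0$. So while $x<t-1$ the sum only decreases as $x$ grows, and one reduces to the case $x=t-1$.
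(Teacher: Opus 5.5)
Your proof is correct and follows essentially the paper's route: bound $h_k(G)$ below by $\sum_j\binom{x_j}{2}$, raise the deficient count to $t-1$ without increasing this sum, and then apply Statement \ref{l35} to the remaining $i$ counts, which sum to $ki+1$. Your increment formula $g(S)-g(S-1)=\lfloor (S-1)/i\rfloor$ is a more explicit form of the paper's \emph{move the maximum and minimum closer} step, and your alternative exchange argument is exactly that step. Your version also checks what the paper leaves implicit: that some other class always has size at least $k+1$, so each step strictly decreases the sum.
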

\begin{proof}
Let $x_j$ be the number of vertices of $G$ with degree
in the $j$-th interval ($j=0,\ldots,i$). Then $h_k(G)\geqslant \sum {x_j\choose 2}$,
whereas $f_0(n,k)={t-1\choose 2}+{k+1\choose 2}+(i-1){k\choose 2}=\sum {y_j\choose 2}$,
where $y_0=t-1$, $y_1=k+1$, $y_2=\ldots=y_i=k$.
If the statement of the lemma is false, then $\min_j x_j\leqslant t-1$. By moving
the maximum and minimum among $x_1,\ldots,x_i$ closer while preserving
the sum, we can make one of them equal to $t-1$, while the sum
$\sum {x_j\choose 2}$ can only decrease. But when
one of the $x_j$ equals $t-1$, the sum of the others equals
$(k+1)+(i-1)k$, therefore, by convexity, the sum of the values of the convex function ${x\choose 2}$
over the remaining variables is at least ${k+1\choose 2}+(i-1){k\choose 2}$
(this can also be proved, for example, by moving the values closer while preserving the sum). Hence
$\sum_j {x_j\choose 2}\geqslant f_0(n,k)$ and $G$ is not a counterexample, a contradiction. 
\end{proof}

We will use the following term:

\begin{definition}\label{main} Let $G=(V,E)$ be a graph with $n$ vertices, $k<n$.
A subset of vertices of $G$ such that every pair of vertices in the subset is close will be called a vertex group of $G$.
\end{definition}

Lemma \ref{l2} gives us several partitions of the vertices into $i+1$ groups of size at least $t$ (each partition of $\{0, ... , n-1\}$ into intervals of length at most $k$ induces a partition of the vertices into groups). We will estimate the number of close pairs of vertices separately for pairs within one group and pairs from different groups, while choosing a suitable partition into groups. We divide the argument into 2 stages. In the first stage we do this roughly, in order to exclude simple cases and thereby reduce the computations needed in the second stage, which completes the proof.

\subsection{Stage 1 --- rough estimate}

Partition the numbers from 0 to $n-1$ into $i+1$ intervals containing at most $k$ numbers each. Accordingly,
the vertex set of $G$ is partitioned into $i+1$ groups.

\begin{lemma}\label{l3'}
Suppose that under every partition into groups, each group contains at least $t+p_2$ vertices for some integer $p_2\geq0$ (we can define $p_2$ so that this holds, according to Lemma \ref{l2}).
Let $x$ be the average degree of the vertices with degrees in $[0,t-1]$. Then the average degree of the vertices with degrees in $[n-t,n-1]$ is at most $ki+x-p_2-1$.
\end{lemma}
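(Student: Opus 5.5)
The plan is to double count the edges between the low-degree and high-degree vertices. Let $L$ be the set of vertices with degrees in $[0,t-1]$ and $H$ the set with degrees in $[n-t,n-1]$. Put $a=|L|$, $b=|H|$, and let $y$ be the average degree over $H$.

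\textbf{Step 1: sizes of $L$ and $H$.} I will choose two partitions of $\{0,\dots,n-1\}$ into $i+1$ intervals of length at most $k$.
\begin{itemize}
\item The first is $[0,t-1],[t,t+k-1],\dots,[n-k,n-1]$. Its intervals have lengths $t,k,\dots,k$, and $t\le k$.
\item The second is $[0,k-1],\dots,[n-t-k,n-t-1],[n-t,n-1]$, the mirror image.
\end{itemize}
Both are admissible partitions in the sense of Lemma \ref{l2}. By the hypothesis on $p_2$, $a\ge t+p_2$ and $b\ge t+p_2$. In particular $b\ge t>x$, since $x\le t-1$. Also $L\cap H=\emptyset$, because $n=ki+t\ge k+t\ge 2t$ gives $n-t>t-1$.

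\textbf{Step 2: double counting $e(L,H)$.}
\begin{itemize}
\item Upper bound: every edge between $L$ and $H$ is counted in the degree sum of $L$, so $e(L,H)\le ax$.
\item Lower bound: a vertex $h\in H$ is non-adjacent to exactly $n-1-\deg h$ vertices other than itself, and $h\notin L$. Hence $h$ has at least $a-(n-1-\deg h)$ neighbours in $L$. Summing over $H$ gives $e(L,H)\ge ab-b(n-1)+by$.
\end{itemize}
Combining the two bounds,
$$
y\le n-1-a\Bigl(1-\frac{x}{b}\Bigr).
$$

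\textbf{Step 3: reaching the target.} Since $n-1-(ki+x-p_2-1)=t+p_2-x$, it suffices to show $a(1-x/b)\ge t+p_2-x$. Note $1-x/b\ge 0$.
\begin{itemize}
\item If $a\ge b$, then $a(1-x/b)\ge b(1-x/b)=b-x\ge t+p_2-x$.
\item If $a<b$, then $x/b\le x/a$, so $a(1-x/b)\ge a(1-x/a)=a-x\ge t+p_2-x$.
\end{itemize}
Either way $y\le ki+x-p_2-1$.

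The argument has no real obstacle. The only points needing care are checking that both boundary partitions are legitimate, so that $p_2$ bounds both $a$ and $b$, and that $L$ and $H$ are disjoint, so that no self-loop correction enters the count. If $H$ were empty there would be nothing to prove, but Lemma \ref{l2} guarantees $b\ge t\ge 1$.
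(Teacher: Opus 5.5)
Your proof is correct and follows essentially the same route as the paper. The paper also bounds the $L$--$H$ edges by $x|L|$ and the $L$--$H$ non-edges by the total non-degree of $H$, then concludes from $|L||H| < (t+p_2)\max(|L|,|H|)$; your two-case comparison of $a$ and $b$ is the same step argued directly rather than by contradiction. Your explicit checks, that both boundary partitions are admissible (so $|L|,|H|\ge t+p_2$) and that $L\cap H=\emptyset$, fill in details the paper leaves implicit.
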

\begin{proof}
If the claim is false, then the average degree of the vertices with degrees in $[n-t,n-1]$ is greater than $ki+x-p_2-1$,
then the average non-degree of the vertices with degrees in $[n-t,n-1]$ is less than  $t-x+p_2$. Let 
$y,z$ --- be the numbers of vertices with degrees in $[0,t-1]$ and in $[n-t,n-1]$, respectively.
Then there are at most $xy$
edges and fewer than $(t-x+p_2)z$ non-edges between these groups. Hence 
    $yz < xy + (t-x+p_2)z\le (t+p_2)\max(y,z)$, which implies $\min(y,z)< t+p_2$ ---
    a contradiction.
\end{proof}

In the following lemma we estimate the number of close pairs of vertices within one group.

\begin{lemma}\label{l336}
    Suppose that the $n=ki+t$ vertices are partitioned into $i+1$ groups, and the size of the smallest group
    is $t+p_2$, while the smallest of the remaining groups has size $t+p_1$, where $p_1,p_2\geq0$. 
    If $p_1+p_2\leq k-t$, then the number of pairs of vertices belonging to the same
    group is at least
$$
f_0(n,k) - (-k^2/2+k(p_1+p_2)+kt+k/2-p_1^2/2-p_1t+p_1/2-p_2^2/2-p_2t+p_2/2-t^2/2-t/2+1).
$$
    If $p_1+p_2\geq k-t$, then the number of pairs of vertices belonging to the same
    group is at least
$$
f_0(n,k) - (-k^2/2+k(p_1+p_2)+kt+3k/2-p_1^2/2-p_1t-p_1/2-p_2^2/2-p_2t-p_2/2-t^2/2-3t/2+1).
$$
\end{lemma}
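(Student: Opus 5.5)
The plan is to bound the number of intra-group pairs, $\sum_j {x_j\choose 2}$ over the $i+1$ group sizes $x_j$, by treating the two smallest groups exactly and applying convexity (Statement \ref{l35}) to the remaining $i-1$ groups. We then subtract the result from $f_0(n,k)$ and check that the difference is the stated expression.

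Two groups have sizes exactly $t+p_2$ and $t+p_1$. The other $i-1$ groups have total size
$$ki+t-(t+p_2)-(t+p_1)=k(i-1)+(k-t-p_1-p_2).$$
The function ${x\choose 2}$ is convex on the integers, so Statement \ref{l35} applies to these $i-1$ sizes. It does not require $A,B\ge 0$, so no case distinction on $i$ is needed beyond $i\ge 1$. We do not need the sizes to be at least $t+p_1$; only their sum matters.

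\textbf{Case $p_1+p_2\le k-t$.} Apply Statement \ref{l35} with base value $k$, taking $B=k-t-p_1-p_2\ge 0$ copies of $k+1$ and $A=i-1-B$ copies of $k$. Using ${k+1\choose 2}-{k\choose 2}=k$, this gives
$$\sum_j {x_j\choose 2}\ge {t+p_2\choose 2}+{t+p_1\choose 2}+(i-1){k\choose 2}+k(k-t-p_1-p_2).$$
Recall $f_0(n,k)={t-1\choose 2}+{k+1\choose 2}+(i-1){k\choose 2}$. Hence $f_0$ minus this bound equals
$${k+1\choose 2}+{t-1\choose 2}-{t+p_1\choose 2}-{t+p_2\choose 2}-k(k-t-p_1-p_2).$$
Expanding term by term:
\begin{itemize}
\item the $k$-terms give $-k^2/2+k/2+k(t+p_1+p_2)$;
\item the $t$-terms give $\tfrac{t^2-3t+2}{2}-t^2+t=-t^2/2-t/2+1$;
\item each $p_r$ contributes $-p_r^2/2-p_rt+p_r/2$.
\end{itemize}
This is exactly the first displayed expression.

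\textbf{Case $p_1+p_2\ge k-t$.} Here the remaining total is $k(i-1)-(p_1+p_2+t-k)$. Apply Statement \ref{l35} with base value $k-1$, taking $A=p_1+p_2+t-k$ copies of $k-1$ and $i-1-A$ copies of $k$. Using ${k\choose 2}-{k-1\choose 2}=k-1$, this gives
$$\sum_j {x_j\choose 2}\ge {t+p_2\choose 2}+{t+p_1\choose 2}+(i-1){k\choose 2}-(k-1)(p_1+p_2+t-k).$$
The difference from $f_0$ is then
$${k+1\choose 2}+{t-1\choose 2}-{t+p_1\choose 2}-{t+p_2\choose 2}+(k-1)(p_1+p_2+t-k).$$
Expanding term by term:
\begin{itemize}
\item the $k$-terms give $k^2/2+k/2-k^2+k=-k^2/2+3k/2$, together with $k(p_1+p_2)+kt$;
\item the extra $-(p_1+p_2+t)$ changes the linear coefficients of $p_1$ and $p_2$ from $+1/2$ to $-1/2$, and that of $t$ from $-1/2$ to $-3/2$.
\end{itemize}
This is exactly the second displayed expression.

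The only delicate points are choosing the right base value ($k$ or $k-1$) in each case, and justifying that Statement \ref{l35} may be used even when $A$ or $B$ is negative, as it is when $i$ is small. The rest is the routine algebraic expansion above.
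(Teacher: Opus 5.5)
Your proof is correct and is essentially the paper's argument. You apply Statement~\ref{l35} to the $i-1$ groups other than the two smallest, with values $k,k+1$ when $p_1+p_2\le k-t$ and $k-1,k$ otherwise, then add ${t+p_1\choose 2}+{t+p_2\choose 2}$ and expand; your term-by-term expansion and your remark that Statement~\ref{l35} allows negative $A$ or $B$ simply make explicit what the paper leaves as ``expand the brackets.''
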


\begin{proof} We apply 
Statement \ref{l35} to the function ${x\choose 2}$; the numbers
$x_j$, $j=1,\ldots,i-1$, are the sizes of all groups except the two
smallest ones.
If $p_1+p_2\leq k-t$, we obtain 
$$
\sum {x_j\choose 2}\geqslant (i-1-k+t+p_1+p_2){k\choose 2}+(k-t-p_1-p_2){k+1\choose 2}.
$$
 If $p_1+p_2\geq k-t$, then
$$
\sum {x_j\choose 2}\geqslant (i-1+k-t-p_1-p_2){k\choose 2}+(-k+t+p_1+p_2){k-1\choose 2}.
$$

To complete the proof of the lemma it is enough to add
to these expressions
${t+p_1\choose 2}+{t+p_2\choose 2}$ and
and expand the brackets.
\end{proof}

Next we estimate the number of close pairs of vertices from different groups.

Now suppose that for every such partition the smallest group has at least $t+p_2$ vertices, the second smallest has at least $t+p_1$ vertices, with $p_1\geq p_2\geq0$. 

Let $x$ --- the average degree of the vertices with degrees in $[0,t-1]$. Let $y\leq ki+x-p_2-1$ be the average degree of the vertices with degrees in $[n-t,n-1]$ (we used Lemma \ref{l3'}). 
Since $y\geq n-t=ki$,
we obtain $x\geq p_2+1$.

For all $j$, $j=0,1,\ldots,i$, let $a_j$ be the number of vertices with degrees in $[kj,\frac{x+y-ki}{2}]$, $b_j$ the number of vertices with degrees in $[\frac{x+y-ki}{2},kj+t-1]$, and $c_j$ the number of vertices with degrees in $[kj+t,k(j+1)-1]$.

Also note that 
the desired number of close pairs of vertices from
different groups is at least $b_0a_1+c_0(a_1+b_1)+b_1a_2+c_1(a_2+b_2)+\ldots $. We estimate this using Statement \ref{l5!}.

Suppose the first case in Statement \ref{l5!} applies (as there, define $c_i=0$): $b_0a_1+c_0(a_1+b_1)+b_1a_2+c_1(a_2+b_2)+\ldots \geq \min\{(a_j+b_j+c_j)
    (a_{j+1}+b_{j+1}+c_{j+1}),
    1\le j\le i-1\} \geq (t+p_1)(t+p_2)$.

Suppose $p_1+p_2\geq k-t$. According to Lemma \ref{l336}, to complete the proof it remains to verify the inequality: \\
$(t+p_1)(t+p_2) > (-k^2/2+k(p_1+p_2)+kt+3k/2-p_1^2/2-p_1t-p_1/2-p_2^2/2-p_2t-p_2/2-t^2/2-3t/2+1)$ \\
As a function of $k$, the right-hand side is a downward-opening parabola with vertex at $p_1+p_2+t+3/2$ - hence (taking the boundary condition into account) it is enough to check the inequality at $k=p_1+p_2+t$ - and under this substitution the inequality becomes: \\
$(t+p_1)(t+p_2) > p_1p_2+p_1+p_2+1$ - which follows from Lemma \ref{t0}.

Suppose $p_1+p_2\leq k-t$. According to Lemma \ref{l336}, to complete the proof it remains to verify the inequality: \\
$(t+p_1)(t+p_2) > (-k^2/2+k(p_1+p_2)+kt+k/2-p_1^2/2-p_1t+p_1/2-p_2^2/2-p_2t+p_2/2-t^2/2-t/2+1)$ \\
As a function of $k$, the right-hand side is a downward-opening parabola with vertex at $p_1+p_2+t+1/2$ - hence it is enough to check the inequality at $k=p_1+p_2+t$ - and under this substitution the inequality becomes: \\
$(t+p_1)(t+p_2) > p_1p_2+p_1+p_2+1$ - which follows from Lemma \ref{t0}. \\

Suppose the second case in Statement \ref{l5!} applies: $b_0a_1+c_0(a_1+b_1)+b_1a_2+c_1(a_2+b_2)+\ldots \geq c_0(a_1+b_1+c_1) + \min\{(a_1+b_1+c_1)(a_2+b_2+c_2), \dots , (a_{i-2}+b_{i-2}+c_{i-2})(a_{i-1}+b_{i-1}+c_{i-1}) , (a_{i-1}+b_{i-1}+c_{i-1})a_i\}$. \\
If the last minimum equals $\min\{(a_1+b_1+c_1)(a_2+b_2+c_2), \dots , (a_{i-2}+b_{i-2}+c_{i-2})(a_{i-1}+b_{i-1}+c_{i-1})\}$, the theorem follows in the same way as in the first case.
\\

Therefore, at Stage 2 we may assume that, with the notation above, the number of close pairs from different groups is at least \\
$\min\{(b_0+c_0)(a_1+b_1+c_1), c_0(a_1+b_1+c_1) + (a_{i-1}+b_{i-1}+c_{i-1})a_i\}$ (this is the expression from the third case of Statement \ref{l5!} and the expression from the second case of Statement \ref{l5!}, where the large minimum is replaced by $(a_{i-1}+b_{i-1}+c_{i-1})a_i$)

\subsection{Stage 2 --- completing the proof}

First we refine Lemma \ref{l3'}.

\begin{lemma}\label{l3}
Denote the two numbers --- the number of vertices with degrees in $[0,t-1]$ and the number of vertices with degrees in $[n-t,n-1]$ --- by $t+p_2$ and $t+p_3$, respectively, where $p_3\geq p_2$.
Let $x$ be the average degree of the vertices with degrees in $[0,t-1]$. Then the average degree of the vertices with degrees in $[n-t,n-1]$ is at most $ki+x-\frac{(t+p_3)(p_2+1)}{t+p_2}$.
\end{lemma}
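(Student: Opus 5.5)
We follow the same double-counting scheme as in the proof of Lemma \ref{l3'}, but keep the two group sizes separate instead of bounding by $\max(y,z)$.

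Let $L$ be the set of vertices with degrees in $[0,t-1]$ and $H$ the set of vertices with degrees in $[n-t,n-1]$. Put $|L|=y=t+p_2$ and $|H|=z=t+p_3$ with $p_3\ge p_2$, as in the statement. Let $x$ be the average degree over $L$ and $w$ the average degree over $H$. For $v\in H$ we call $n-1-\deg(v)$ its non-degree; the average non-degree over $H$ is then $n-1-w=ki+t-1-w$. We count the $yz$ pairs $(u,v)$ with $u\in L$, $v\in H$, each of which is either an edge or a non-edge.

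\textbf{Edges.} The number of edges between $L$ and $H$ is at most $\sum_{u\in L}\deg(u)=xy$.

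\textbf{Non-edges.} The number of non-edges between $L$ and $H$ is at most $\sum_{v\in H}(n-1-\deg(v))=(ki+t-1-w)z$.

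Hence
$$
yz\le xy+(ki+t-1-w)z .
$$
Dividing by $z$ and solving for $w$ gives
$$
w\le ki+t-1-y+\frac{xy}{z}=ki+x-(y-t+1)-x\Bigl(1-\frac{y}{z}\Bigr).
$$

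This is not yet the claimed bound, so it remains to control $x$. We use the Stage~1 consequence $x\ge p_2+1$, which follows from Lemma \ref{l3'} together with $w\ge ki$; it applies because $|L|,|H|\ge t+p_2$ with $p_2=\min$ of the two excesses. Since $z\ge y$, the coefficient $1-y/z$ is nonnegative, so replacing $x$ by the smaller value $p_2+1$ in the term $-x(1-y/z)$ only increases the right-hand side. Therefore
$$
w\le ki+x-(p_2+1)-(p_2+1)\Bigl(1-\frac{y}{z}\Bigr).
$$

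Compare this with the target correction $\frac{(t+p_3)(p_2+1)}{t+p_2}=(p_2+1)\frac{z}{y}$. Since $z/y\ge 1$ and $1-y/z\ge 0$, we need
$$
(p_2+1)\Bigl(2-\frac{y}{z}\Bigr)\ge (p_2+1)\frac{z}{y},
$$
that is, $2-r\ge 1/r$ with $r=y/z\le 1$. This is false unless $r=1$, because $2-r-1/r=-(1-r)^2/r\le 0$. So the substitution $x\ge p_2+1$ loses too much, and the direct bound must be used more carefully.

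The fix is to bound the non-edges from the $L$ side instead of the $H$ side. The main issue is choosing which side's degree sum controls the count.
\begin{itemize}
\item Each $u\in L$ has at most $\deg(u)$ neighbours in $H$, hence at least $z-\deg(u)$ non-neighbours in $H$.
\item Summing over $L$, there are at least $zy-xy$ non-edges between $L$ and $H$.
\item Each non-edge at a vertex $v\in H$ contributes to its non-degree, so $(ki+t-1-w)z\ge y(z-x)$.
\end{itemize}
This gives
$$
w\le ki+t-1-\frac{y(z-x)}{z}.
$$

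We now need $t-1-y+\frac{xy}{z}\le x-\frac{z(p_2+1)}{y}$. With $y=t+p_2$, the term $t-1-y$ equals $-(p_2+1)$. Write $s=p_2+1$. The needed inequality becomes
$$
x\Bigl(1-\frac{y}{z}\Bigr)\ge s\Bigl(\frac{z}{y}-1\Bigr).
$$
Both sides are nonnegative; after dividing by $\frac{z-y}{z}$ (when $z>y$) this reads $x\ge \frac{sz}{y}$.

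This final comparison is the main obstacle. It does not follow from $x\ge s$ alone. To obtain it, we redo the Lemma \ref{l3'} contradiction argument with the unequal sizes $y$ and $z$, using the hypothesis that $w\ge ki$. From $w\ge ki$ and the displayed bound,
$$
ki\le ki+t-1-y+\frac{xy}{z},
$$
so $\frac{xy}{z}\ge p_2+1$, that is, $x\ge \frac{(p_2+1)z}{y}$, which is exactly what is needed.

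Substituting this back yields $w\le ki+x-\frac{(t+p_3)(p_2+1)}{t+p_2}$, as claimed. The remaining step is the routine algebra of combining these two inequalities and checking the case $z=y$ separately, where the claimed bound reduces directly to the displayed one.
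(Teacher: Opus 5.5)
Your final argument is correct and is essentially the paper's proof. Both use the same double count $yz\le xy+(n-1-w)z$ over $L\times H$, together with $n-1-w\le t-1$ (i.e.\ $w\ge ki$) and $z\ge y$; the paper combines these in one chain by bounding $(z-y)(n-1-w)\le (z-y)(t-1)$, while you first extract $x\ge (p_2+1)z/y$ and substitute it back (your ``fix'' is the same inequality re-derived, and the $x\ge p_2+1$ detour can be dropped). The one step the paper has that you skip is the reduction to $|L|=t+p_2$: it reads $t+p_2$ as the smaller of $|L|,|H|$ and passes to the complement graph when $|L|>|H|$, which is harmless because $x+(n-1-w)$ is invariant under complementation.
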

\begin{proof}
Let $z$ - the average non-degree of the vertices with degrees in $[n-t,n-1]$. We need to prove that $x+z \geq \frac{(t+p_3)(p_2+1)}{t+p_2}+t-1$. Suppose not, $x+z < \frac{(t+p_3)(p_2+1)}{t+p_2}+t-1$. Without loss of generality, assume that the number of vertices with degrees in $[0,t-1]$ is $t+p_2$ (otherwise pass to the complement graph). Then the number of edges between the groups under consideration is\\
$(t+p_2)(t+p_3) \leq (t+p_2)x+(t+p_3)z \leq (t+p_2)(x+z-t+1) + (t+p_3)(t-1) < \\ \\ < (t+p_2) \frac{(t+p_3)(p_2+1)}{t+p_2} + (t+p_3)(t-1) = (t+p_2)(t+p_3)$ --- a contradiction.
\end{proof}

Now refine Lemma \ref{l336} to estimate the number of close pairs of vertices within one group.

\begin{lemma}\label{l36}
    Let $p_1, p_2, p_3 \in \mathbb{Z}_{\geq 0}$, $y_0, ... , y_i \in \mathbb{Z}_{\geq 0}$, with three of these numbers equal to $t+p_1$,
     $t+p_2$ and $t+p_3$, respectively, and $\sum y_j = n$. 
    If $p_1+p_2+p_3\leq2(k-t)$, then
\begin{align*}
\sum {y_j \choose 2} \geq (i-2-2k+2t+p_1+p_2+p_3){k\choose 2}+(2k-2t-p_1-p_2-p_3){k+1\choose 2} = \\
= f_0(n,k) - (-k^2+2kt-t^2+1-\frac{p_1^2}{2}+p_1k-p_1t+\frac{p_1}{2}-\frac{p_2^2}{2}+ \\
+p_2k-p_2t+\frac{p_2}{2}-\frac{p_3^2}{2}+p_3k-p_3t+\frac{p_3}{2}).
\end{align*}
    If $p_1+p_2+p_3\geq2(k-t)$, then 
\begin{align*}
\sum {y_j \choose 2} \geq (i-2+2k-2t-p_1-p_2-p_3){k\choose 2}+(-2k+2t+p_1+p_2+p_3){k-1\choose 2} = \\
=f_0(n,k) - (-k^2+2kt+2k-t^2-2t+1-\frac{p_1^2}{2}+p_1k-p_1t-\frac{p_1}{2}-\frac{p_2^2}{2}+ \\
+p_2k-p_2t-\frac{p_2}{2}-\frac{p_3^2}{2}+p_3k-p_3t-\frac{p_3}{2}).
\end{align*}

Denote this expression by $f'(p_1,p_2,p_3)$
\end{lemma}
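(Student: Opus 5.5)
The plan follows the proof of Lemma \ref{l336}: split off the three prescribed summands and bound the rest with Statement \ref{l35} applied to the convex function ${x\choose 2}$.

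\textbf{Setup.} Among the $i+1$ numbers $y_j$, three are fixed: $t+p_1$, $t+p_2$, $t+p_3$. The remaining $m=i-2$ numbers sum to
\[
S = n-3t-p_1-p_2-p_3 = ki-2t-p_1-p_2-p_3 = (i-2)k + \bigl(2k-2t-p_1-p_2-p_3\bigr).
\]

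\textbf{Case $p_1+p_2+p_3\le 2(k-t)$.} Apply Statement \ref{l35} with base value $k$ and
\[
A=i-2-(2k-2t-p_1-p_2-p_3), \qquad B=2k-2t-p_1-p_2-p_3.
\]
Then $A+B=i-2$ and $Ak+B(k+1)=S$. Statement \ref{l35} explicitly allows $A$ to be negative, so no sign condition on $A$ is needed. This bounds the sum over the remaining $y_j$ from below by $A{k\choose 2}+B{k+1\choose 2}$.

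\textbf{Case $p_1+p_2+p_3\ge 2(k-t)$.} Apply Statement \ref{l35} with base value $k-1$, setting
\[
A'=2t+p_1+p_2+p_3-2k, \qquad B'=i-2-A'.
\]
Then $A'(k-1)+B'k=(i-2)k-A'=S$. This gives the bound $B'{k\choose 2}+A'{k-1\choose 2}$, and $B'$ may again be negative.

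\textbf{Adding back the fixed terms.} The displayed lower bounds in the statement correspond to these bounds plus the three terms ${t+p_1\choose 2}+{t+p_2\choose 2}+{t+p_3\choose 2}$. So in each case I add these three terms. Then I verify the stated identity with $f_0(n,k)$, where
\[
f_0(n,k)=(i-1){k\choose 2}+{k+1\choose 2}+{t-1\choose 2}.
\]
The method is to expand everything as polynomials in $k,t,p_1,p_2,p_3$, subtract, and compare with the stated correction terms.

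\textbf{Main check.} The main obstacle is just this algebraic bookkeeping. Use ${k+1\choose 2}-{k\choose 2}=k$ and ${k\choose 2}-{k-1\choose 2}=k-1$.

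In the first case the difference $f_0(n,k)-\sum{y_j\choose 2}$ becomes
\[
k-(2k-2t-\Sigma)k+{t-1\choose 2}-\sum_{r}{t+p_r\choose 2}, \qquad \Sigma=p_1+p_2+p_3,
\]
up to the $(i-1)$ versus $(i-2)$ shift absorbing one ${k\choose 2}$. I will collect the $p_r^2$, $p_rk$, $p_rt$ and $p_r$ coefficients. They should match $-\tfrac{p_r^2}{2}+p_rk-p_rt+\tfrac{p_r}{2}$ and reproduce the constant $-k^2+2kt-t^2+1$.

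The second case is analogous, with $k-1$ in place of $k$, which explains the $-p_r/2$ and the extra $2k-2t$.

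I will double-check that the ${k\choose 2}$ count matches, since $f_0$ has $i-1$ copies together with ${k+1\choose 2}$. The identity must absorb this exactly; if a constant discrepancy appears, it should be traced to the ${t-1\choose 2}$ term.
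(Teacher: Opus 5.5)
Your proposal is correct and is the same argument as the paper's: apply Statement \ref{l35} to the $i-2$ remaining summands with base value $k$ (resp.\ $k-1$), letting the coefficient $A$ (resp.\ $B'$) be negative, then add ${t+p_1\choose 2}+{t+p_2\choose 2}+{t+p_3\choose 2}$ and expand. One correction to your bookkeeping: in the first case the difference is $2{k\choose 2}+k-(2k-2t-\Sigma)k+{t-1\choose 2}-\sum_r{t+p_r\choose 2}$, where one ${k\choose 2}$ comes from the $(i-1)$ versus $(i-2)$ shift and a second from ${k+1\choose 2}={k\choose 2}+k$; together these give $k^2$ and hence the $-k^2$ term, while the constant $+1$ comes from ${t-1\choose 2}-3{t\choose 2}=1-t^2$, so the stated identity holds exactly (and analogously in the second case).
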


\begin{proof} We apply 
Statement \ref{l35} to the function ${x\choose 2}$, where $x_j$ are all $y_j$ from the statement of the lemma except those equal to $t+p_1$, $t+p_2$, and $t+p_3$.
If $p_1+p_2+p_3 \leq 2(k-t)$, we obtain 
$$
\sum {x_j\choose 2}\geqslant (i-2-2k+2t+p_1+p_2+p_3){k\choose 2}+(2k-2t-p_1-p_2-p_3){k+1\choose 2}.
$$
 If $p_1+p_2+p_3 \geq 2(k-t)$, then
$$
\sum {x_j\choose 2}\geqslant (i-2+2k-2t-p_1-p_2-p_3){k\choose 2}+(-2k+2t+p_1+p_2+p_3){k-1\choose 2}.
$$

To complete the proof of the lemma it is enough to add
to these expressions
${t+p_1\choose 2}+{t+p_2\choose 2}+{t+p_3\choose 2}$ and
and expand the brackets.
\end{proof}

We now estimate the number of close pairs of vertices from different groups to obtain an overall bound.

\begin{lemma}\label{l4}
Define $p_3\geq p_2$ as in Lemma \ref{l3}.

Then $p_1\geq0$ can be chosen so that in such a graph $G$ there are at least $f'(p_1,p_2,p_3)+\frac{(t+p_1)(t+p_2)(t+p_3)(p_2+1)}{2t^2+tp_2-3t-2p_2-p_2p_3-p_3}$ close pairs of vertices, where $2t^2+tp_2-3t-2p_2-p_2p_3-p_3 > 0$.
\end{lemma}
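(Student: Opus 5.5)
The plan is to combine the within-group bound of Lemma \ref{l36} with the cross-group bound left over at the end of Stage 1, and to bound the cross-group term below by $(t+p_1)$ times an explicit lower estimate for the number of ``boundary'' vertices.

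\textbf{Setting up the groups and the choice of $p_1$.} Fix the partition of $\{0,\dots,n-1\}$ used at the end of Stage 1. Its groups are the $i+1$ blocks counted by $a_j+b_j+c_j$, and it is arranged so that the first block contains the degrees $[0,t-1]$ and the last block is $[n-t,n-1]=[ki,ki+t-1]$.
\begin{itemize}
\item The sizes of the extreme groups are $t+p_2$ and $t+p_3$, as in Lemma \ref{l3}.
\item Define $p_1$ by $t+p_1=\min\{a_1+b_1+c_1,\;a_{i-1}+b_{i-1}+c_{i-1}\}$. By Lemma \ref{l2}, $p_1\ge 0$.
\end{itemize}
If $i\ge 3$, these three groups are distinct. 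Lemma \ref{l36} then bounds the close pairs inside groups below by $f'(p_1,p_2,p_3)$. If $i=2$ the groups coincide; in that case I expect the estimate to be handled directly, or even more easily.

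By the end of Stage 1, the close pairs between different groups number at least
$$\min\{(b_0+c_0)(a_1+b_1+c_1),\; c_0(a_1+b_1+c_1)+(a_{i-1}+b_{i-1}+c_{i-1})a_i\}\ \ge\ (t+p_1)\min\{b_0+c_0,\;c_0+a_i\}.$$
So it suffices to show that both $b_0+c_0$ and $c_0+a_i$ are at least
$$N:=\frac{(t+p_2)(t+p_3)(p_2+1)}{D},\qquad D=2t^2+tp_2-3t-2p_2-p_2p_3-p_3 .$$
In fact it suffices to bound $b_0$ and $a_i$ separately.

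\textbf{Tail estimates.} Let $x$ and $y$ be the average degrees in the two extreme groups, and let $m=\frac{x+y-ki}{2}$ be the threshold used in the definitions of $a_j,b_j$.
\begin{itemize}
\item The low group has $t+p_2$ vertices, average degree $x$, and all degrees in $[0,t-1]$. An averaging (Markov-type) argument therefore gives at least $(t+p_2)\frac{x-m}{t-1-m}$ vertices of degree $\ge m$. This bounds $b_0$.
\item Symmetrically, the high group has $t+p_3$ vertices whose non-degrees average $n-1-y$ and are at most $t-1$. This gives at least $(t+p_3)\frac{(ki+m)-(y-?)}{\cdots}$-type many vertices of degree $\le ki+m$, in the same form after passing to non-degrees. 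This bounds $a_i$.
\end{itemize}
By Lemma \ref{l3}, $x-(y-ki)\ge \frac{(t+p_3)(p_2+1)}{t+p_2}=:\delta$. Since $m$ is the midpoint of $x$ and $y-ki$, both numerators are at least $\delta/2$ times the group size. The denominators are at most $t-1-m$ and $t-1-(k i+t-1-\dots)$, respectively; both are controlled by $t-1$ minus a quantity involving $\delta$.

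Substituting and minimizing over the free parameter $x$ (equivalently $m$) should give $N$ with the denominator $D$. The positivity $D>0$ should follow from $p_2\le p_3$, the group sizes being at most about $2t$, and $x\ge p_2+1$.

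\textbf{Main obstacle.} I expect the main difficulty to be the last step: tuning the Markov-type estimates so that the worst case over $x$ and the threshold $m$ produces exactly the expression $\frac{(t+p_1)(t+p_2)(t+p_3)(p_2+1)}{D}$, including careful treatment of integrality at the threshold. There is also a smaller bookkeeping point: checking that $c_0$ can simply be dropped in the second case without loss, or whether it must be combined with $a_i$ to reach $N$.
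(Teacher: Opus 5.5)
There is a genuine gap in the within-group step. In your partition (first $i$ blocks of length $k$, last block $[ki,ki+t-1]$), block $0$ is the degree interval $[0,k-1]$. It therefore contains $a_0+b_0+c_0$ vertices, whereas $t+p_2$ in Lemma~\ref{l3} counts only the vertices with degrees in $[0,t-1]$, i.e.\ $a_0+b_0$. So Lemma~\ref{l36} applied to this partition involves the sizes $t+p_1$, $t+p_2+c_0$, $t+p_3$, and yields only $f'(p_1,p_2+c_0,p_3)$. This is in general smaller than $f'(p_1,p_2,p_3)$: for $c_0=1$ and $p_1+p_2+p_3<2(k-t)$ it is smaller by exactly $k-t-p_2>0$. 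The reason is that shifting vertices out of blocks of size about $k$ into a block of size $t+p_2<k$ lowers the convexity bound. You cannot avoid this by choosing a better partition: blocks exactly $[0,t-1]$ and $[ki,ki+t-1]$, with all $i+1$ blocks of length at most $k$, exist only when $t=k$. For the same reason you must not drop $c_0$ from the cross term, nor weaken $c_0(a_1+b_1+c_1)$ to $c_0(t+p_1)$ as your displayed inequality does. That term is exactly what pays for the discrepancy.

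The missing step, which is how the paper proceeds, is to move the $c_0$ vertices into block $1$. With $A_j=a_j+b_j+c_j$ one has
\[
\binom{a_0+b_0+c_0}{2}+\binom{A_1}{2}+c_0A_1=\binom{a_0+b_0}{2}+\binom{c_0+A_1}{2}+c_0(a_0+b_0).
\]
Set $\rho=\frac{(t+p_3)(p_2+1)}{D}$. The tail bounds $b_0\ge\rho(a_0+b_0)$ and $a_i\ge\rho(a_i+b_i)$, together with $\rho\le 1$, give
\[
c_0(a_0+b_0)+\min\{b_0A_1,A_{i-1}a_i\}\ge\min\{\rho(a_0+b_0)(c_0+A_1),\,\rho(a_i+b_i)A_{i-1}\}.
\]
Now take $t+p_1$ to be $c_0+A_1$ or $A_{i-1}$, whichever attains this minimum. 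Then apply Lemma~\ref{l36} to the sizes $a_0+b_0,\,c_0+A_1,\,A_2,\dots,A_i$; this is legitimate because the lemma is purely numerical and does not need the blocks to be intervals.

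Your Markov-type tail estimates are the right ones and agree with the paper's, but the minimization over $x$ needs the constraint $x\ge\delta$, which comes from $y\ge ki$ and Lemma~\ref{l3}. The bound $\frac{x-m}{t-1-m}$ decreases in $m$, and at $m=x-\delta/2$ it increases in $x$. Its worst case is therefore $\frac{\delta}{2(t-1)-\delta}=\rho$. Likewise, $D=(t+p_2)\bigl(2(t-1)-\delta\bigr)>0$ follows directly from $\delta\le x\le t-1$, not from bounds on group sizes.
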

\begin{proof}
As before, let $x$ be the average degree of the vertices with degrees in $[0,t-1]$. Let $y\leq ki+x-\frac{(t+p_3)(p_2+1)}{t+p_2}$ be the average degree of the vertices with degrees in $[n-t,n-1]$ (we used Lemma \ref{l3}). 
Since all these vertices have degree at least $n-t=ki$,
obtain $x\geq \frac{(t+p_3)(p_2+1)}{t+p_2}$.

Partition the numbers from 0 to $n-1$ into $i+1$ intervals
as follows: the first $i$ intervals contain $k$ numbers each,
the last one contains $t$ numbers. Accordingly,
the vertex set of $G$ is partitioned into $i+1$ groups.

In group $j$,
$j=0,\ldots,i$, there are exactly $a_j$ vertices
whose degrees lie in 
$[kj, \frac{x+y-ki}{2}+kj]$, , $b_j$ is the number of vertices with degrees $(\frac{x+y-ki}{2}, kj+t-1]$, $c_j$ is the number of vertices with degrees $[kj+t, k(j+1)-1]$.
As established in Stage 1, we may assume that the number of close pairs of vertices from different groups is at least $\min\{(b_0+c_0)(a_1+b_1+c_1), c_0(a_1+b_1+c_1) + (a_{i-1}+b_{i-1}+c_{i-1})a_i\}$ 

We estimate $b_0$ from below.
Let the fraction of vertices with degree greater than 
$\frac{x+y-ki}{2}$
(among the vertices with degrees in $[0,t-1]$) be $r$, and let 
$q=1-r$. Then the average degree 
of the zero-th group is at most 
\begin{align*}
x&\leq r \cdot (t-1)+q\cdot \left(\frac{x+y-ki}{2}
\right)\leqslant  r(t-1)+q \cdot \left(\frac{2x-\frac{(t+p_3)(p_2+1)}{t+p_2}}{2}\right)\\
&r \cdot (t-1) - \frac{(1-r)(t+p_3)(p_2+1)}{2(t+p_2)} \geqslant rx \geqslant \frac{r(t+p_3)(p_2+1)}{t+p_2}\\
&r \cdot \left (t-1-\frac{(t+p_3)(p_2+1)}{2(t+p_2)} \right) \geqslant \frac{(t+p_3)(p_2+1)}{2(t+p_2)}\\
&r \cdot (2t^2+2tp_2-2t-2p_2-tp_2-t-p_2p_3-p_3) \geqslant (t+p_3)(p_2+1),
\end{align*}
which implies $2t^2+2tp_2-2t-2p_2-tp_2-t-p_2p_3-p_3>0$ and $1\geqslant r\geqslant \frac{(t+p_3)(p_2+1)}{2t^2+2tp_2-2t-2p_2-tp_2-t-p_2p_3-p_3}$ (the left inequality follows from the definition of $r$).

Now estimate $a_i$ from below.
Let the fraction of vertices with degree 
$\leqslant \frac{x+y+ki}{2}$ 
(among the vertices with degrees in $[ki,ki+t-1]$) be $r'$, and let $q'=1-r'$. 
Then the average degree $y$ in the $i$-th group is at least 
\begin{align*}
   y&\geq  r' \cdot ki+q' \cdot \frac{x+y+ki}{2}
   \geq  r' \cdot ki+q' \cdot \left(\frac{2y+\frac{(t+p_3)(p_2+1)}{t+p_2}}{2}\right)\\
   &r' \cdot ki + \frac{(1-r')(t+p_3)(p_2+1)}{2(t+p_2)} \leqslant r'y \leqslant r' \left(ki+x-\frac{(t+p_3)(p_2+1)}{t+p_2} \right)\\
&\frac{(1-r')(t+p_3)(p_2+1)}{2(t+p_2)} \leqslant r' \left(x-\frac{(t+p_3)(p_2+1)}{t+p_2} \right) \leqslant r' \left(t-1-\frac{(t+p_3)(p_2+1)}{t+p_2} \right)\\
&\frac{(t+p_3)(p_2+1)}{2(t+p_2)} \leqslant r' \left(t-1-\frac{(t+p_3)(p_2+1)}{2(t+p_2)} \right)\\
&r' \cdot (2t^2+2tp_2-2t-2p_2-tp_2-t-p_2p_3-p_3) \geqslant (t+p_3)(p_2+1),
\end{align*}
which implies $2t^2+2tp_2-2t-2p_2-tp_2-t-p_2p_3-p_3>0$ and $1\geqslant r'\geqslant \frac{(t+p_3)(p_2+1)}{2t^2+2tp_2-2t-2p_2-tp_2-t-p_2p_3-p_3}$ (the left inequality follows from the definition of $r'$).
\\

Then \\
$\min\{(b_0+c_0)(a_1+b_1+c_1), c_0(a_1+b_1+c_1) + (a_{i-1}+b_{i-1}+c_{i-1})a_i\} = \\ = c_0(a_1+b_1+c_1) + \min\{b_0(a_1+b_1+c_1), (a_{i-1}+b_{i-1}+c_{i-1})a_i\} \geqslant \\
c_0(a_1+b_1+c_1) + \min\{\left(\frac{(t+p_3)(p_2+1)}{2t^2+2tp_2-2t-2p_2-tp_2-t-p_2p_3-p_3}(a_0+b_0)\right)(a_1+b_1+c_1), \\ 
(a_{i-1}+b_{i-1}+c_{i-1})\left((a_i+b_i)\frac{(t+p_3)(p_2+1)}{2t^2+2tp_2-2t-2p_2-tp_2-t-p_2p_3-p_3}\right)\}$
\\

$\sum {a_j+b_j+c_j\choose 2}$ - the number of pairs of vertices from the same group (summed over the whole graph). Thus the total number of close pairs of vertices is at least \\
$\sum {a_j+b_j+c_j\choose 2} + c_0(a_1+b_1+c_1) + \min\{\left(\frac{(t+p_3)(p_2+1)}{2t^2+2tp_2-2t-2p_2-tp_2-t-p_2p_3-p_3}(a_0+b_0)\right)(a_1+b_1+c_1), \\ 
(a_{i-1}+b_{i-1}+c_{i-1})\left((a_i+b_i)\frac{(t+p_3)(p_2+1)}{2t^2+2tp_2-2t-2p_2-tp_2-t-p_2p_3-p_3}\right)\} \geq \\ \\
\sum_{j=2}^{i+1} {a_j+b_j+c_j\choose 2} + {a_0+b_0\choose 2} + {c_0+a_1+b_1+c_1\choose 2} + \min\{\left(\frac{(t+p_3)(p_2+1)}{2t^2+2tp_2-2t-2p_2-tp_2-t-p_2p_3-p_3}(a_0+b_0)\right)(c_0+a_1+b_1+c_1), \\ 
(a_{i-1}+b_{i-1}+c_{i-1})\left((a_i+b_i)\frac{(t+p_3)(p_2+1)}{2t^2+2tp_2-2t-2p_2-tp_2-t-p_2p_3-p_3}\right)\}$ \\
--- in the last step we used the fact that $1\geqslant \frac{(t+p_3)(p_2+1)}{2t^2+2tp_2-2t-2p_2-tp_2-t-p_2p_3-p_3}$ --- this result was already obtained when estimating $r$. \\

Take $t+p_1=c_0+a_1+b_1+c_1$ or $t+p_1=a_{i-1}+b_{i-1}+c_{i-1}$, depending on which of these components attains the minimum ($p_1\geq0$ by Lemma \ref{l2}). Then by Lemma \ref{l36}\\
$\sum_{j=2}^{i+1} {a_j+b_j+c_j\choose 2} + {a_0+b_0\choose 2} + {c_0+a_1+b_1+c_1\choose 2} + \min\{\left(\frac{(t+p_3)(p_2+1)}{2t^2+2tp_2-2t-2p_2-tp_2-t-p_2p_3-p_3}(a_0+b_0)\right)(c_0+a_1+b_1+c_1), \\ 
(a_{i-1}+b_{i-1}+c_{i-1})\left((a_i+b_i)\frac{(t+p_3)(p_2+1)}{2t^2+2tp_2-2t-2p_2-tp_2-t-p_2p_3-p_3}\right)\} \geqslant \\ \\
\frac{(t+p_1)(t+p_2)(t+p_3)(p_2+1)}{2t^2+tp_2-3t-2p_2-p_2p_3-p_3} +f'(p_1+p_2+p_3)$
--- that is, the lemma is proved
\end{proof}

It follows from Lemma \ref{l4} that to complete the proof of the theorem it is enough to show that $f'(p_1,p_2,p_3)+\frac{(t+p_1)(t+p_2)(t+p_3)(p_2+1)}{2t^2+tp_2-3t-2p_2-p_2p_3-p_3} \geq f_0(n,k)$. By Lemma \ref{l36} this is equivalent to the following 2 inequalities:

1) In the case $p_1+p_2+p_3 \leq 2(k-t)$: \\
$\frac{(t+p_1)(t+p_2)(t+p_3)(p_2+1)}{2t^2+tp_2-3t-2p_2-p_2p_3-p_3} \geqslant -k^2+2kt-t^2+1-\frac{p_1^2}{2}+p_1k-p_1t+\frac{p_1}{2}-\frac{p_2^2}{2}+p_2k-p_2t+\frac{p_2}{2}-\frac{p_3^2}{2}+p_3k-p_3t+\frac{p_3}{2}$ \\
As a function of $k$, the right-hand side is a downward-opening parabola with vertex at $t+p_1/2+p_2/2+p_3/2$ - hence it is enough to check the inequality at $k=t+p_1/2+p_2/2+p_3/2$ - and under this substitution the inequality becomes: \\
$\frac{(t+p_1)(t+p_2)(t+p_3)(p_2+1)}{2t^2+tp_2-3t-2p_2-p_2p_3-p_3} \geqslant -p_1^2/4-p_2^2/4-p_3^2/4+p_1p_2/2+p_1p_3/2+p_2p_3/2+p_1/2+p_2/2+p_3/2+1$

2) In the case $p_1+p_2+p_3 \geq 2(k-t)$: \\
$\frac{(t+p_1)(t+p_2)(t+p_3)(p_2+1)}{2t^2+tp_2-3t-2p_2-p_2p_3-p_3} \geqslant -k^2+2kt+2k-t^2-2t+1-\frac{p_1^2}{2}+p_1k-p_1t-\frac{p_1}{2}-\frac{p_2^2}{2}+p_2k-p_2t-\frac{p_2}{2}-\frac{p_3^2}{2}+p_3k-p_3t-\frac{p_3}{2}$ \\
As a function of $k$, the right-hand side is a downward-opening parabola with vertex at $t+1+p_1/2+p_2/2+p_3/2$ - hence (taking the boundary condition into account) it is enough to check the inequality at $k=t+p_1/2+p_2/2+p_3/2$ - and under this substitution the inequality also becomes: \\
$\frac{(t+p_1)(t+p_2)(t+p_3)(p_2+1)}{2t^2+tp_2-3t-2p_2-p_2p_3-p_3} \geqslant -p_1^2/4-p_2^2/4-p_3^2/4+p_1p_2/2+p_1p_3/2+p_2p_3/2+p_1/2+p_2/2+p_3/2+1$ \\

Notice that the right-hand side is no greater than $\frac{p_1p_2+p_2p_3+p_1+p_2+p_3}{2}+1$ - replace it by this expression\\
It is also known from Lemma \ref{l4} that the denominator on the left-hand side is positive. Multiply by it and expand the brackets. We obtain: \\
$3p_1p_2^2p_3/2+p_1p_2^2t/2+p_1p_2^2+p_1p_2p_3t+2p_1p_2p_3+2p_1p_2t+p_1p_2+p_1p_3t+p_1p_3/2+3p_1t/2+p_2^2p_3^2/2+p_2^2p_3t/2+3p_2^2p_3/2+p_2^2t^2-p_2^2t/2+p_2^2+p_2p_3^2+2p_2p_3t+5p_2p_3/2+p_2t^3+p_2t/2+2p_2+p_3^2/2+3p_3t/2+p_3+t^3-2t^2+3t \geq 0$ \\ 
--- which holds because $p_2^2t^2-p_2^2t/2+p_2^2 \geq 0$ and $t^3-2t^2+3t \geq 0$ \\
Theorem \ref{t2} is proved.


\section{Acknowledgement}
The work of S. Onishchenko was performed at the Saint Petersburg Leonhard Euler International Mathematical Institute and supported by the Ministry of Science and Higher Education of the Russian Federation (agreement no. 075–15–2025–343).

\end{document}